\theoremstyle{plain}
\newtheorem{teo}{Theorem}[section]
\newtheorem{cor}[teo]{Corollary}
\theoremstyle{definition}
\newtheorem{defn}[teo]{Definition}
\theoremstyle{remark}
\newtheorem{obs}[teo]{Remark}
\numberwithin{equation}{section}
\newcommand{\bbR}{\mathbb{R}}
\newcommand{\eps}{\varepsilon}
\newcommand{\V}{\text{Var}}
\newcommand{\Cov}{\text{Cov}}
\newcommand{\xf}{x_{\infty}}
\newcommand{\tf}{\tau_{\infty}}
\newcommand{\n}[2]{\ensuremath{{#1}^{(#2)}}}
\newcommand{\s}[3]{\ensuremath{{#1}^{(#2)}_{#3}}}
\begin{document}

\title[Rumour process with random stifling]{On the behaviour of a rumour process \\
with random stifling}

\author[Elcio Lebensztayn et al.]{Elcio Lebensztayn}
\address[E. Lebensztayn and F. P. Machado]{Statistics Department, Institute of Mathematics and Statistics, University of S\~ao Paulo, Rua do Mat\~ao 1010, CEP 05508-090, S\~ao Paulo, SP, Brazil.}
\email{elcio@ime.usp.br}
\thanks{Elcio Lebensztayn was supported by CNPq (311909/2009-4), F\'abio P. Machado by CNPq (306927/2007-1), and Pablo M. Rodr\'iguez by FAPESP (2010/06967-2).}
\author[]{F\'abio P. Machado}
\email{fmachado@ime.usp.br}
\author[]{Pablo M. Rodr\'iguez}
\address[P. M. Rodr\'{i}guez]{Statistics Department, Institute of Mathematics, Statistics and Scientific Computing, State University of Campinas, Rua S\'ergio Buarque de Holanda 651, CEP 13083-859, Campinas, SP, Brazil.}
\email{pablor@ime.unicamp.br}

\keywords{Stochastic rumour, Maki-Thompson, Markov chain, Limit theorems.}
\subjclass[2000]{Primary: 60F05, 60J27; secondary: 60K30.}
\date{\today}

\begin{abstract}
We propose a realistic generalization of the Maki-Thompson rumour model by assuming that each spreader ceases to propagate the rumour right after being involved in a random number of stifling experiences. 
We consider the process with a general initial configuration and establish the asymptotic behaviour (and its fluctuation) of the ultimate proportion of ignorants as the population size grows to $\infty$. 
Our approach leads to explicit formulas so that the limiting proportion of ignorants and its variance can be computed.
\end{abstract}

\maketitle 

\section{Introduction}
\label{S: Introduction}

In the past decades, there has been great interest in understanding and modelling different processes for information diffusion in a population. Most of the time, the mathematical theory of epidemics is adapted for this purpose, even though there are differences between the process of spreading information and the process of spreading a virus or a disease. 
In the standard versions of the models, the most noticeable differences are between the way spreaders cease to spread an item of information and the way infected individuals are removed from epidemic processes. 
Still, some slightly modified models fit both processes (see, for example, \citet{dunstan}, where the general stochastic epidemic model is considered as a model for the diffusion of rumours).

\citet{kurtz} recently introduced a model using a complete graph in which, as soon as an individual is infected, an anti-virus is given to that individual in such a way that the next time a virus tries to infect it, the virus is ineffective. 
Besides, a virus can survive up to $ L $ individuals empowered with anti-virus. 
Individuals are represented by the vertices of the complete graph, while the virus is represented by a moving agent that replicates every time it hits a healthy individual. The authors prove a Weak Law of Large Numbers and a Central Limit Theorem for the proportion of infected individuals after the process is completed. 

There are two classical models for the spreading of a rumour in a population, which were formulated by \citet{kendall} and \citet{maki}.
In the model proposed by \citet{maki}, a closed homogeneously mixing population experiences a rumour process. Three classes of individuals are considered: ignorants, spreaders and stiflers. The rumour is propagated through the population by directed contact between spreaders and other individuals, which are governed by the following set of rules.
When a spreader interacts with an ignorant, the ignorant becomes a spreader; whenever a spreader contacts a stifler, the spreader turns into a stifler and when a spreader meets another spreader, the initiating spreader becomes a stifler. In the last two cases, it is said that the spreader was involved in a \textit{stifling experience}. 
Observe that the process eventually ends (when no more spreaders are left in the population).

We show how the techniques used by \citet{kurtz} in the context of epidemic models can be useful in studying a general rumour process. In particular, we propose a generalization of the Maki-Thompson model. In our model, each spreader decides to stop propagating the rumour right after being involved in a random number of stifling experiences.

To define the process, consider a closed homogeneously mixing population of size $N + 1$.
Let $R$ be a nonnegative integer valued random variable with distribution given by $P(R=i)=r_i$ for $i=0,1,\dots$, and let $\mu = E[R]>0$ and $\nu^2=\V[R]$. 
Assign independently to each initially ignorant individual a random variable with the same distribution as $R$. 
Once an ignorant hears the rumour, the value of $R$ assigned to him determines the number of stifling experiences the new spreader will have until he stops propagating the rumour. If this random variable equals zero, then the ignorant joins the stiflers immediately after hearing the rumour.

For $i=1,2,\dots$, we say that a spreader is of type~$i$ if this individual has exactly $i$ remaining stifling experiences.
We denote the number of ignorants, spreaders of type~$i$ and stiflers at time $t$ by $\n{X}{N}(t)$, $\n{Y_i}{N}(t)$ and $\n{Z}{N}(t)$, respectively.
Let $\n{Y}{N}(t) = \sum_{i=1}^{\infty} \n{Y_i}{N}(t)$ be the total number of spreaders at time~$t$, so $\n{X}{N}(t)+\n{Y}{N}(t)+\n{Z}{N}(t)=N+1$ for all~$t$. Notice that the infinite-dimensional process 
\begin{equation}
\label{F: Proc}
\{\n{V}{N}(t)\}_{t\geq 0}:=\{(\n{X}{N}(t),\n{Y}{N}_1(t),\n{Y}{N}_2(t),\dots)\}_{t\geq 0}
\end{equation}
is a continuous time Markov chain with increments and corresponding rates given by
{\allowdisplaybreaks
\begin{align*}
&\text{increment} & &\text{rate} & &  \\
&(-1,0,0,\dots) & &r_0 \, X Y\\
&(-1, 0, \dots, \overset{i-1}{0},\overset{i}{1},\overset{i+1}{0},\dots) & &r_i \, X Y & & i=1,2,\dots\\
&(0,\dots,0,\overset{i-1}{1},\overset{i}{-1}, \overset{i+1}{0}, \dots) & &\left(N-X\right) Y_i & & i=2,3,\dots \\[0.1cm]
&(0,-1,0,0,\dots) & &\left(N-X\right) Y_1. & & 
\end{align*}}%
We see that the first case indicates the transition of the process in which a spreader interacts with an ignorant and
the ignorant becomes a stifler immediately (which happens with probability~$r_0$). 
The second case indicates the transition in which a spreader interacts with an ignorant and
the ignorant becomes a spreader of type $i$ (which happens with probability $r_i$).
The third case represents the situation in which a spreader of type~$i$ is involved in a stifling experience but remains a spreader (of type $i-1$), and finally the last transition indicates the event that a spreader of type $1$ is involved in a stifling experience, thus becoming a stifler.

We suppose that the process starts with
\begin{align*}
&\n{X}{N}(0)=(N+1) \, \s{x}{N}{0},  \\
&\n{Y}{N}_i(0)=(N+1) \, \s{y}{N}{i,0} \; \text{ for } i=1,2,\dots \; \text{and }\\
&\n{Z}{N}(0)=(N+1) \, \s{z}{N}{0}.
\end{align*}
That is, $\s{x}{N}{0}, \s{y}{N}{i,0},\s{z}{N}{0} \in [0,1]$ are the initial proportions of ignorants,
spreaders of type $i$ and stiflers of the population, respectively, which are defined in such a way that
\begin{gather*}
\sum_{i=1}^{\infty} \s{y}{N}{i,0} > 0 \quad \text{and} \\
\s{x}{N}{0} + \sum_{i=1}^{\infty} \s{y}{N}{i,0} + \s{z}{N}{0} = 1.
\end{gather*}
In addition, we assume that the following limits exist:
\begin{align*}
x_0 &= \lim_{N \to \infty} \s{x}{N}{0} > 0 \quad \text{and} \\
y_{i,0} &= \lim_{N \to \infty} \s{y}{N}{i,0} \; \text{ for all } i = 1,2, \dots,
\end{align*}
and that
\[ w_0 = \sum_{i=1}^{\infty} i \, y_{i,0} < \infty. \]

As already mentioned, the process eventually ends. Let
$$ \tau^{(N)}= \inf \{t: \n{Y}{N}(t) = 0 \} $$
be the absorption time of the process. Our main purpose is to establish limit theorems for the proportion $N^{-1}\n{X}{N}(\tau^{(N)})$ of ignorants at the end of the process. 
For the classical Maki-Thompson model, this problem was first studied rigorously 
by \citet{sudbury}, who proved, by using martingale arguments, that this proportion converges in probability to $0.203$. 
This result was later generalized by \citet{watson} using the normal asymptotic approximation. 
\citet{picard} derived the exact joint distribution of the final number of people who heard the rumour and the total personal time units during which the rumour was spread. 
In \citet{belen04}, the authors present an analysis of the proportion of the population who never hear the rumour starting from a general initial condition.
See also Chapter~5 of \citet{dg} for an excellent account of rumour models.

The approach used to prove our theorems is the theory of density dependent Markov chains, presented in~\citet{ethier}. 
To the best of our knowledge, this technique is used for the first time in the context of rumour models here and in \citet{EFP}.
In that paper, the authors study a family of rumour processes which includes the classical Daley-Kendall and Maki-Thompson models as particular cases.
The results presented here are of independent interest, as they refer to a generalization of the Maki-Thompson model with random stifling and general initial configuration.

\section{Main results}
\label{S: Main results}

\begin{defn}
\label{D: fun}
Suppose that $\mu <\infty$ and consider the function $f:(0,x_0]\longrightarrow \bbR$ given by
\begin{equation*}
f(x)=w_0+(1+\mu)(x_0-x)+\log \frac{x}{x_0}.
\end{equation*}
We define $\xf =\xf(\mu,x_0,w_0)$ as the unique root of $f$ in the interval $(0,x_0]$ satisfying $f'(x)\geq 0$.  
\end{defn}

Notice that $\xf$ is the unique root of $f$, except in the case where $x_0>(1+\mu)^{-1}$ and $w_0=0$. 
See Figure~\ref{Fig: Root}.

\begin{figure}[ht]
\begin{center}
\includegraphics[scale=0.48]{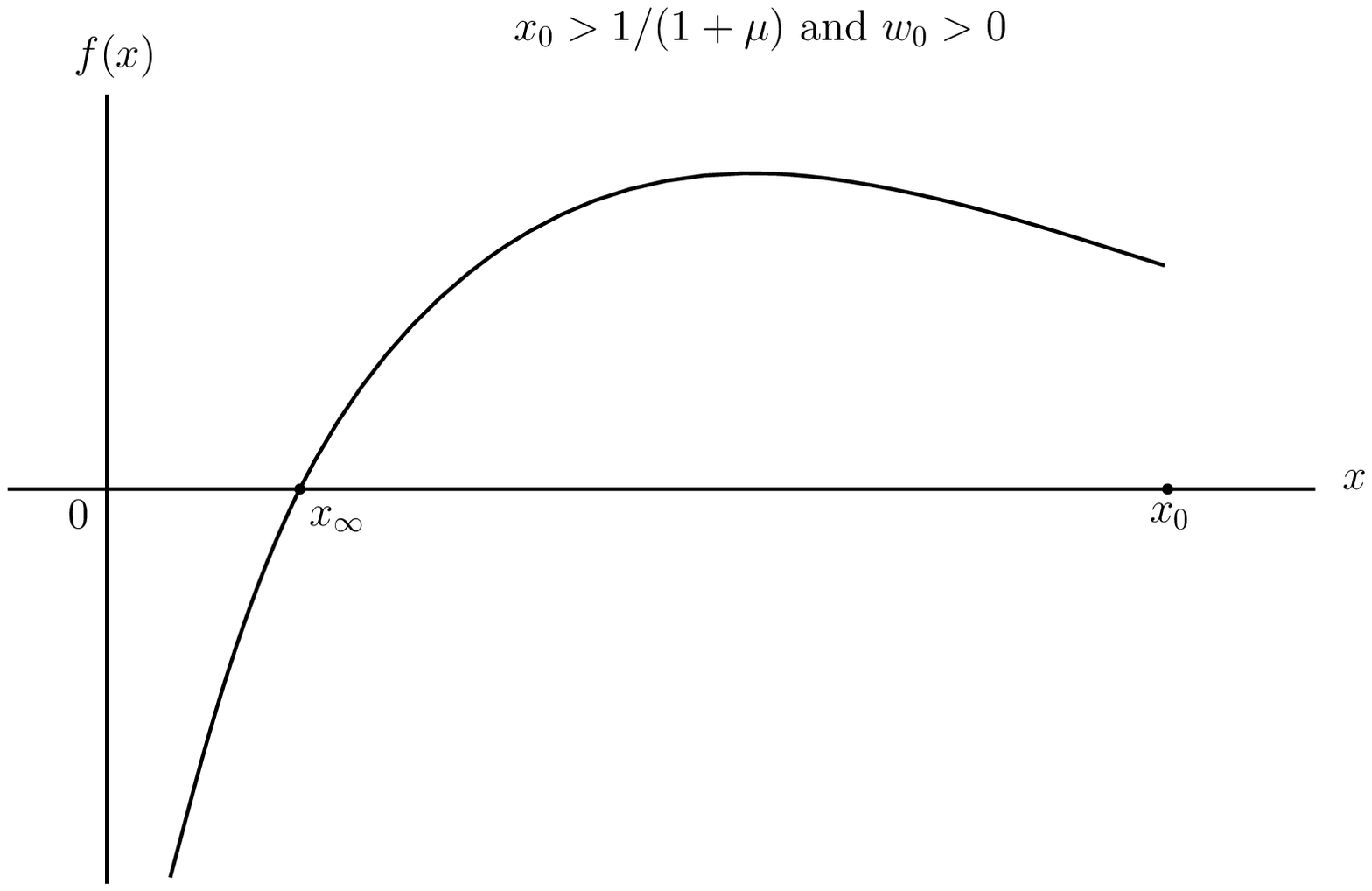}
\quad
\includegraphics[scale=0.48]{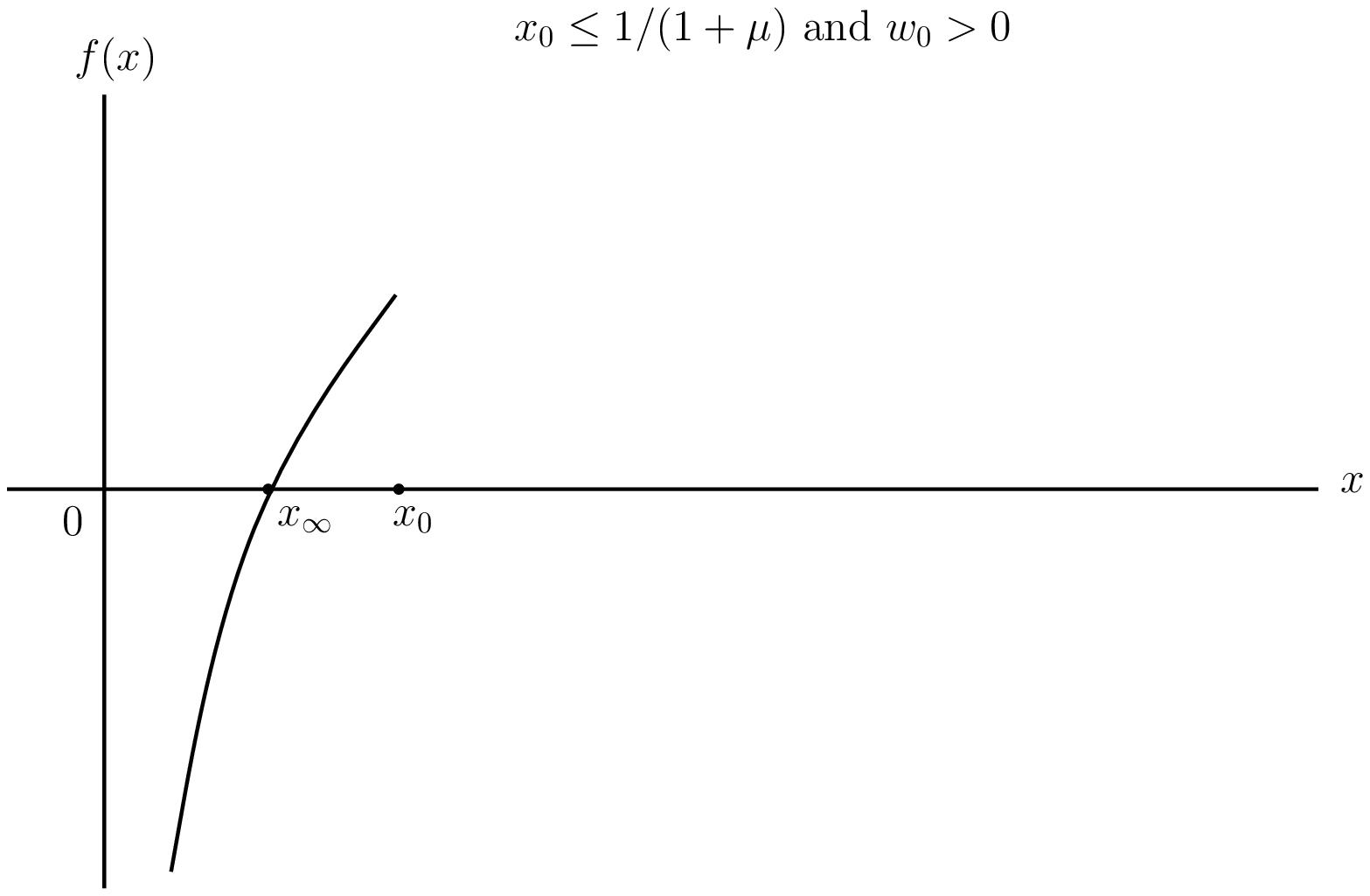}
\\[0.2cm]
\includegraphics[scale=0.48]{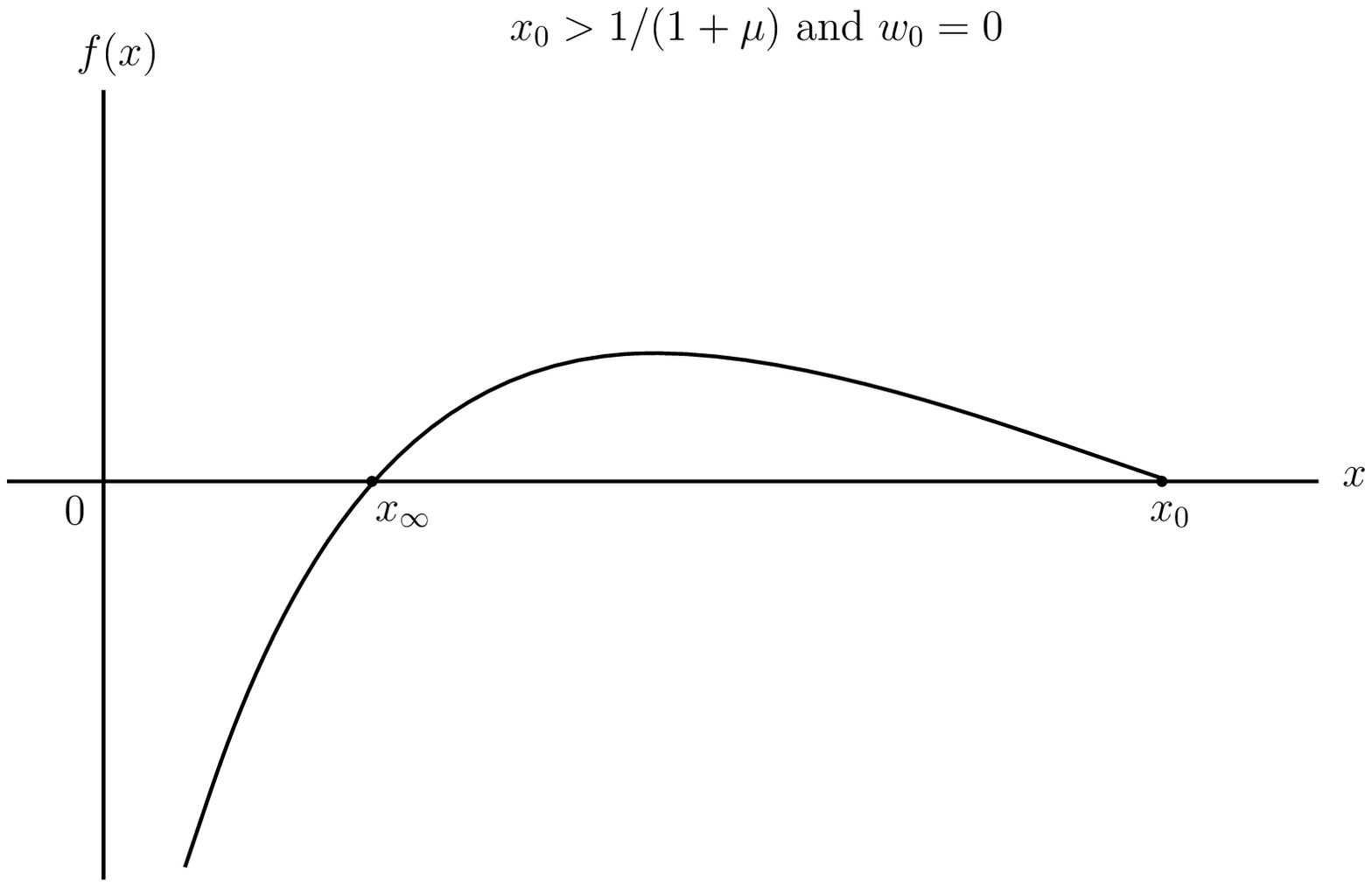}
\quad
\includegraphics[scale=0.48]{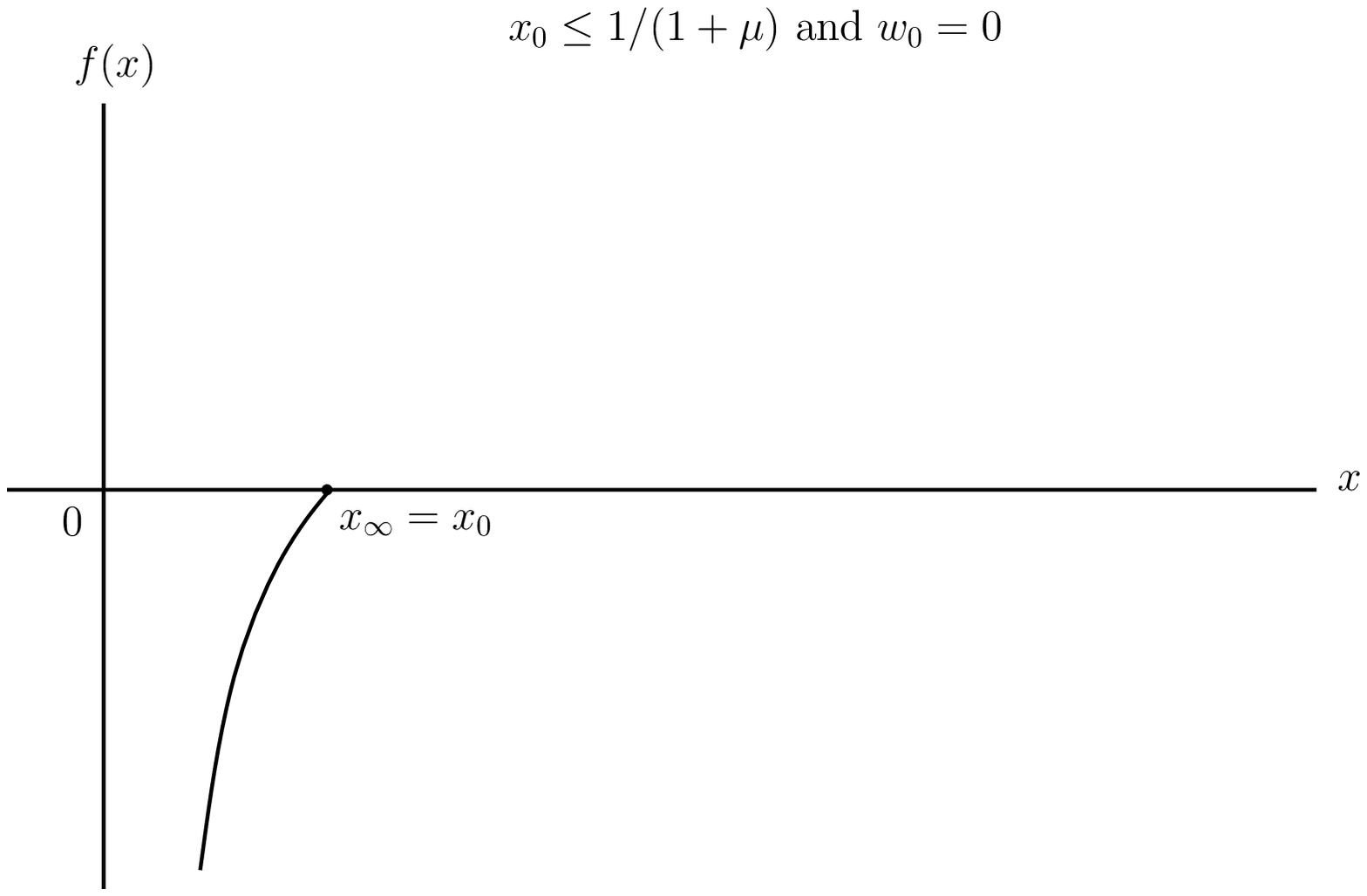}
\end{center}
\caption{Behaviour of $ f $ -- The four possible cases in terms of $x_0$ and $w_0$.}
\label{Fig: Root}
\end{figure}

\begin{obs}
We can express $\xf$ in terms of the Lambert $W$ function, which is the inverse of the function $ x \mapsto x \, e^x $. Indeed, $\xf$ satisfies
$$\xf = x_0 \, e^{-(1+\mu)(x_0-\xf)-w_0}$$
which can be written as
\begin{equation}
\label{F: prelambert}
-x_0(1+\mu) \, e^{-x_0(1+\mu)-w_0}=-\xf(1+\mu) \, e^{-\xf(1+\mu)}.
\end{equation}
Then, if $W_0$ denotes the principal branch of the Lambert $W$ function (that is, the branch that satisfies $W(x)\geq -1$), we obtain from \eqref{F: prelambert} that
\begin{equation}
\label{F: lambert}
\xf(\mu, x_0, w_0) = -(1+\mu)^{-1} \, W_0(-x_0 \, (1+\mu) \, e^{-x_0(1+\mu)-w_0}),
\end{equation}
by noting that $-e^{-1}<-x_0(1+\mu) \, e^{-x_0(1+\mu)-w_0}<0.$ More details about the Lambert function can be found in \citet{lambert}.
\end{obs}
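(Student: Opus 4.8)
The plan is to establish \eqref{F: lambert} by reducing the defining equation $f(\xf)=0$ to the canonical form $u\,e^{u}=c$ solved by the Lambert $W$ function, and then to pin down the correct branch using the sign condition on $f'$ recorded in Definition~\ref{D: fun}. Throughout I take as given, from that definition, that $\xf$ exists and is uniquely characterised in $(0,x_0]$ by the two conditions $f(\xf)=0$ and $f'(\xf)\geq 0$.

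First I would exponentiate the relation $f(\xf)=0$. Writing $f(\xf)=w_0+(1+\mu)(x_0-\xf)+\log(\xf/x_0)=0$ and solving for $\log(\xf/x_0)$ yields the fixed-point identity $\xf=x_0\,e^{-(1+\mu)(x_0-\xf)-w_0}$, the first display of the remark. Multiplying both sides by $-(1+\mu)\,e^{-(1+\mu)\xf}$ and collecting the exponent then produces \eqref{F: prelambert}, namely $-x_0(1+\mu)e^{-x_0(1+\mu)-w_0}=-\xf(1+\mu)e^{-\xf(1+\mu)}$. Setting $u=-(1+\mu)\xf$, the right-hand side is exactly $u\,e^{u}$, so $u$ is a value of the Lambert $W$ function at $c:=-x_0(1+\mu)e^{-x_0(1+\mu)-w_0}$, and $\xf=-(1+\mu)^{-1}u$.

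The delicate point, and what I expect to be the main obstacle, is the branch selection: on $(-e^{-1},0)$ the real Lambert $W$ has two branches, $W_0$ (with $W_0\geq -1$) and $W_{-1}$ (with $W_{-1}\leq -1$), and $c$ is fed to both. Here the sign clause in Definition~\ref{D: fun} does the work. Since $f'(x)=x^{-1}-(1+\mu)$, the requirement $f'(\xf)\geq 0$ is equivalent to $\xf\leq(1+\mu)^{-1}$, hence to $u=-(1+\mu)\xf\geq -1$, which is precisely the defining inequality of the principal branch. Thus $u=W_0(c)$ and \eqref{F: lambert} follows. I would emphasise that it is exactly this $f'\geq 0$ condition that separates $\xf$ from the possible second root of $f$, the latter corresponding to $W_{-1}$.

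Finally I would verify that $c$ lies in the domain of the principal real branch, so that $W_0(c)$ is real and well defined. Negativity of $c$ is immediate from $x_0>0$ and $\mu>0$. For the lower bound I would write $c=-\bigl(x_0(1+\mu)e^{-x_0(1+\mu)}\bigr)e^{-w_0}$ and use that $t\mapsto -t\,e^{-t}$ attains its minimum $-e^{-1}$ at $t=1$; since $e^{-w_0}\in(0,1]$ multiplies a quantity lying in $[-e^{-1},0)$, the product remains in $[-e^{-1},0)$, with strict inequality away from the borderline case $x_0(1+\mu)=1,\ w_0=0$ (where $c=-e^{-1}$ and $W_0(c)=-1$ still returns $\xf=(1+\mu)^{-1}=x_0$). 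This confirms the bound asserted at the close of the remark and completes the derivation of \eqref{F: lambert}.
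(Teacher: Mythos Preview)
Your argument is correct and follows essentially the same route as the paper's own remark: exponentiate $f(\xf)=0$, rearrange to the form $u\,e^u=c$, and read off the principal branch. Your treatment is in fact more complete than the paper's, since you justify the branch choice via $f'(\xf)\geq 0\Leftrightarrow u\geq -1$ and you correctly flag the borderline case $x_0(1+\mu)=1$, $w_0=0$ where $c=-e^{-1}$ (the paper's strict inequality $-e^{-1}<c$ is slightly overstated there, though $W_0(-e^{-1})=-1$ is still defined and the formula remains valid).
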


Next, we state the Weak Law of Large Numbers for the proportion of the population who have never heard the rumour.

\begin{teo}
\label{T: LLN}
If $0<\mu<\infty$, then
$$\lim_{N\to \infty}\frac{\n{X}{N}(\tau^{(N)})}{N}=x_\infty \quad \text{in probability. }$$
\end{teo}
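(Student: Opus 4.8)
The plan is to collapse the infinite-dimensional chain $\{\n{V}{N}(t)\}$ to a two-dimensional one by tracking, besides the number of ignorants, the \emph{total number of remaining stifling experiences} $\n{W}{N}(t) := \sum_{i=1}^{\infty} i\,\n{Y_i}{N}(t)$. Since each summand has $i \geq 1$, we have $\n{W}{N}(t) \geq \n{Y}{N}(t) \geq 0$ with $\n{W}{N}(t)=0$ if and only if $\n{Y}{N}(t)=0$, so the absorption event $\{\n{Y}{N}=0\}$ coincides with $\{\n{W}{N}=0\}$, and $\n{X}{N}(\tau^{(N)})$ is the value of $\n{X}{N}$ when $\n{W}{N}$ first vanishes. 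The key observation is that the total jump rate of $\{\n{V}{N}(t)\}$ equals $XY + (N-X)Y = NY$, independently of the type profile $(Y_1,Y_2,\dots)$. Consequently, in the embedded jump chain the pair $(\n{X}{N},\n{W}{N})$ evolves autonomously: at each step, with probability $X/N$ an ignorant is consumed, so that $X \mapsto X-1$ and $W \mapsto W+R$ with $R$ an independent copy of the stifling variable, and with the complementary probability $1-X/N$ a stifling experience occurs, so that $W \mapsto W-1$ while $X$ is unchanged. These transition probabilities depend on the state only through $X/N$, which is exactly the density-dependent structure we need.

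Re-embedding this jump chain into continuous time along an independent Poisson clock of rate $N$ (whose sequence of visited states, and hence the absorption value $\n{X}{N}(\tau^{(N)})$, is unchanged), I would obtain a density-dependent Markov chain in the sense of \citet{ethier} with densities $\beta_{(-1,i)}(\xi)=r_i\,\xi$ for $i\geq 0$ and $\beta_{(0,-1)}(\xi)=1-\xi$. The associated drift is $F(\xi,\omega)=(-\xi,\,(1+\mu)\xi-1)$, which is Lipschitz, while the hypothesis $\mu<\infty$ guarantees the summability condition $\sum_\ell |\ell|\,\sup_\xi\beta_\ell(\xi)=(2+\mu)<\infty$. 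Assuming $N^{-1}(\n{X}{N}(0),\n{W}{N}(0))\to(x_0,w_0)$, Theorem~11.2.1 of \citet{ethier} then yields, for every fixed $T>0$,
\[
\sup_{s\leq T}\left| N^{-1}\bigl(\n{X}{N}(s),\n{W}{N}(s)\bigr)-\bigl(\xi(s),\omega(s)\bigr)\right|\longrightarrow 0\quad\text{in probability,}
\]
where $\xi'=-\xi$, $\omega'=(1+\mu)\xi-1$ and $(\xi,\omega)(0)=(x_0,w_0)$. Since $\tfrac{d}{ds}\bigl(\omega-f(\xi)\bigr)=\bigl[(1+\mu)\xi-1\bigr]-f'(\xi)(-\xi)=0$ and $\omega(0)-f(x_0)=w_0-w_0=0$, the fluid trajectory satisfies $\omega(s)=f(\xi(s))$ for all $s$; as $\xi(s)=x_0e^{-s}$ decreases from $x_0$, the first zero of $\omega$ is attained precisely when $\xi=\xf$, the root of $f$ with $f'\geq 0$.

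The remaining, and most delicate, step is to pass from this fixed-time fluid limit to the random absorption time. I would exploit that the crossing is transversal: at the fluid absorption parameter $s^*$ we have $\omega'(s^*)=(1+\mu)\xf-1<0$ whenever $\xf<(1+\mu)^{-1}$, i.e.\ whenever $f'(\xf)>0$ strictly. Fixing a small $\eps>0$ and applying the display above on $[0,s^*-\eps]$ (where the solution stays in the interior $\omega>0$), with high probability one has $N^{-1}\n{X}{N}(s^*-\eps)\approx\xi(s^*-\eps)$, within $O(\eps)$ of $\xf$, and $N^{-1}\n{W}{N}(s^*-\eps)\approx\omega(s^*-\eps)=O(\eps)>0$. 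Because $\n{W}{N}$ then evolves with strictly negative drift, a standard hitting-time estimate for a random walk shows that $\n{W}{N}$ reaches $0$ within $O(N\eps)$ additional steps; as $\n{X}{N}$ is nonincreasing and decreases by at most one per step, it can drop by at most $O(N\eps)$ more before absorption. Hence $N^{-1}\n{X}{N}(\tau^{(N)})$ lies within $O(\eps)$ of $\xf$ with high probability, and letting $\eps\to 0$ completes the proof. I expect the genuine obstacle to be exactly this last step, in particular the degenerate configuration $\xf=(1+\mu)^{-1}$ (the double-root case, which includes $w_0=0$ with $x_0>(1+\mu)^{-1}$), where the crossing is tangential rather than transversal and the hitting-time argument must be replaced by a finer, separate analysis.
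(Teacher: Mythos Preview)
Your approach is essentially the paper's: collapse to the pair $(X,W)$ with $W=\sum_i i\,Y_i$, obtain the same density-dependent chain (you re-embed the jump chain on a rate-$N$ Poisson clock, the paper divides the original rates by $Y$; these give the same object), derive the same ODE with $\omega(s)=f(\xi(s))$, and then pass to the random absorption time.

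Two small corrections to your last paragraph. First, the case $w_0=0$, $x_0>(1+\mu)^{-1}$ is \emph{not} the tangential/double-root case: here $f$ has two distinct zeros, $x_0$ (with $f'(x_0)<0$) and $x_\infty<(1+\mu)^{-1}$ (with $f'(x_\infty)>0$), so the crossing at $s^*>0$ is transversal and your main argument applies directly (the paper just notes $\omega'(0)>0$ to get past $s=0$). The case that actually needs separate handling is $w_0=0$ with $x_0\leq(1+\mu)^{-1}$: then $x_\infty=x_0$, $s^*=0$, and $\omega(s)<0$ for all $s>0$; the paper disposes of it by uniform convergence of $N^{-1}W^{(N)}$ to $\omega$, forcing $\tilde\tau^{(N)}\to 0$. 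Second, your random-walk hitting-time estimate is unnecessary: once you have uniform convergence on bounded intervals and transversality, $\omega(s^*-\eps)>0$, $\omega(s^*+\eps)<0$ already force $\tilde\tau^{(N)}\in(s^*-\eps,s^*+\eps)$ for large $N$, hence $\tilde\tau^{(N)}\to s^*$; then $N^{-1}X^{(N)}(\tau^{(N)})\to\xi(s^*)=x_\infty$ follows from the uniform convergence of $\tilde x^{(N)}$ (extendable to all of $\bbR$ by monotonicity). This is exactly how the paper closes the argument.
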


As a consequence of this theorem, 

\begin{cor}
\label{C: muinf}
If $\mu=\infty$, then
$$\lim_{N\to \infty}\frac{\n{X}{N}(\tau^{(N)})}{N}=0 \quad \text{in probability.}$$
\end{cor}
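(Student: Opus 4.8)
The plan is to deduce the corollary from Theorem~\ref{T: LLN} by a truncation-and-coupling argument, comparing the process with $E[R]=\infty$ against auxiliary processes whose stifling variable has finite mean. For $K\in\bbN$ set $R_K=\min(R,K)$ and $\mu_K=E[R_K]$. Since $\mu=E[R]=\infty$, monotone convergence gives $\mu_K\uparrow\infty$, and $0<\mu_K<\infty$ for all large $K$. Let $\n{\widetilde X}{N}$ denote the number of ignorants, with absorption time $\widetilde\tau^{(N)}$, in the process built from the same initial configuration but in which each newly recruited individual is assigned $R_K$ instead of $R$; since truncation does not touch the initial data, the constants $x_0$ and $w_0$ are unchanged. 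Theorem~\ref{T: LLN} applied to this process then yields $N^{-1}\n{\widetilde X}{N}(\widetilde\tau^{(N)})\to x_\infty(\mu_K,x_0,w_0)$ in probability.

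The second ingredient is that $x_\infty(\mu_K,x_0,w_0)\to 0$ as $K\to\infty$. This is immediate from Definition~\ref{D: fun}: with $f$ as defined there, $f'(x)=x^{-1}-(1+\mu)$, so the condition $f'(\xf)\ge 0$ forces $\xf\le (1+\mu)^{-1}$. Hence $x_\infty(\mu_K,x_0,w_0)\le (1+\mu_K)^{-1}\to 0$.

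The heart of the argument is a pathwise domination showing that shortening the stifling variable can only increase the final number of ignorants. I would carry this out on the embedded jump chain. A direct computation shows that the total jump rate from any state with $\n{Y}{N}>0$ equals $N\,\n{Y}{N}$, that a recruitment (spreader meets ignorant) occurs with probability $\n{X}{N}/N$, and otherwise a stifling experience occurs; crucially, this recruitment probability depends on the state only through $\n{X}{N}$. Consequently the pair $(\n{X}{N},\n{S}{N})$, where $\n{S}{N}=\sum_{i\ge 1} i\,\n{Y_i}{N}$ is the total number of remaining stifling experiences, is itself a Markov chain: at each step $\n{X}{N}$ decreases by $1$ and $\n{S}{N}$ increases by a fresh copy of $R$ with probability $\n{X}{N}/N$, while $\n{S}{N}$ decreases by $1$ otherwise, the process stopping when $\n{S}{N}=0$ (equivalently $\n{Y}{N}=0$). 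Since $\n{X}{N}(\tau^{(N)})$ is a function of this chain alone, it suffices to couple the two $(X,S)$-chains. Running both with the same Bernoulli decisions and the same recruitment draws (with $R_K=\min(R,K)$ for the truncated one) maintains the invariant $\n{X}{N}=\n{\widetilde X}{N}$ together with $\n{S}{N}\ge\n{\widetilde S}{N}$ for as long as the truncated chain survives: shared recruitments keep the $X$-coordinates equal and preserve $\n{S}{N}\ge\n{\widetilde S}{N}$ because $R\ge R_K$, while shared stifling steps lower both $S$-coordinates by $1$. Thus the truncated chain is absorbed no later than the full one, the ignorant counts still agreeing at that instant; thereafter the full chain can only recruit further, giving $\n{X}{N}(\tau^{(N)})\le\n{\widetilde X}{N}(\widetilde\tau^{(N)})$ pointwise.

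Combining the three ingredients finishes the proof. Fix $\eps>0$ and choose $K$ with $x_\infty(\mu_K,x_0,w_0)<\eps$; the domination together with Theorem~\ref{T: LLN} then gives
$$\limsup_{N\to\infty} P\!\left(\frac{\n{X}{N}(\tau^{(N)})}{N}>\eps\right)\le \limsup_{N\to\infty} P\!\left(\frac{\n{\widetilde X}{N}(\widetilde\tau^{(N)})}{N}>\eps\right)=0,$$
whence $N^{-1}\n{X}{N}(\tau^{(N)})\to 0$ in probability. I expect the main obstacle to be the coupling step: one must justify the reduction to the $(X,S)$-chain, in particular that the recruitment probability is genuinely independent of the spreader-type configuration, and then verify that the shared-randomness coupling preserves the invariant up to the absorption of the truncated chain. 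The interchange of the limits $N\to\infty$ and $K\to\infty$ is handled cleanly by the displayed $\eps$-argument and poses no real difficulty.
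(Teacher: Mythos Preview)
Your proposal is correct and follows essentially the same strategy as the paper: truncate $R$ to $R_K=\min(R,K)$, couple so that the full process ends with no more ignorants than the truncated one, apply Theorem~\ref{T: LLN} to the truncated process, and let $K\to\infty$. Your explicit coupling via the reduced $(X,S)$-chain spells out what the paper leaves as ``by a coupling argument'', and your bound $x_\infty\le(1+\mu_K)^{-1}$ obtained directly from the condition $f'(x_\infty)\ge 0$ in Definition~\ref{D: fun} is a cleaner route to $x_\infty(\mu_K,x_0,w_0)\to 0$ than the paper's appeal to the Lambert~$W$ formula~\eqref{F: lambert}.
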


\begin{proof}
Let $R_1$ and $R_2$ be nonnegative integer valued random variables such that $R_1 \leq R_2$ stochastically, that is, $P(R_1\geq i)\leq P(R_2\geq i)$ for all $i\geq 1$. Consider the processes $\{{V}_1^{(N)}(t)\}_{t\geq 0}$ and $\{{V}_2^{(N)}(t)\}_{t\geq 0}$ defined as in~\eqref{F: Proc} by using the random variables $R_1$ and~$R_2$, respectively, and with the same initial conditions. Let $\tau_1^{(N)}$ and $\tau_2^{(N)}$ be the respective absorption times. 
By a coupling argument, these processes can be constructed in such a way that
\begin{equation}
\label{F: comp}
{X}_2^{(N)}(\tau_2^{(N)})\leq {X}_1^{(N)}(\tau_1^{(N)}), \quad \text{a.s.}
\end{equation}
Now suppose that $\{{V}^{(N)}(t)\}_{t\geq 0}$ is the process defined in~\eqref{F: Proc} with the random variable $R$ satisfying $\mu=\infty$. Recall that $r_i = P(R=i)$, $i\geq 0$, and for each $k\geq 1$ define the random variable $R_k$ with distribution given by
\[ P(R_k=i)=r_i \, \text{ if } \, i < k \quad \text{and} \quad P(R_k=k)=\sum_{j=k}^{\infty}r_j. \]
By construction, we have that $R_k \leq R$ stochastically for all $k$. Taking this and \eqref{F: comp} into account, we conclude that
\begin{equation}
\label{F: mono}
{X}^{(N)}(\tau^{(N)})\leq {X}_k^{(N)}(\tau_k^{(N)}), \quad \text{a.s.}
\end{equation} 
for all $k$. Then \eqref{F: mono} and Theorem \ref{T: LLN} imply that
\begin{equation*}
0 \leq \limsup_{N \to \infty}\frac{X^{(N)}(\tau^{(N)})}{N}\leq \xf(\mu_k,x_0,w_0) \quad \text{a.s.},
\end{equation*}
where $\mu_k=E[R_k]$. 
Since $x_0>0$ and $\lim_{k \to \infty}\mu_k=\infty$, we have that $x_0>(1+\mu_k)^{-1}$ for large enough $k$ and in this case $\xf(\mu_k,x_0,w_0)$ (given by~\eqref{F: lambert} with $\mu = \mu_k$) goes to $0$ as $k\to \infty$. 
This completes the proof of Corollary \ref{C: muinf}.
\end{proof}

We now present the Central Limit Theorem for the ultimate proportion of ignorants in the population.

\begin{teo}
\label{T: CLT}
Suppose that $\nu^2<\infty$. Assume also that $w_0>0$ or that $w_0=0$ and $x_0>(1+\mu)^{-1}$. Then,
\[ \sqrt{N} \left(\frac{\n{X}{N}(\tau^{(N)})}{N} - x_\infty \right) \Rightarrow N(0, \sigma^2) \, \text{ as } \, N \to \infty, \]
where $ \Rightarrow $ denotes convergence in distribution, and $ N(0, \sigma^2) $ is the Gaussian distribution with mean zero and variance given by
\begin{equation}
\label{F: Variance}
\sigma^2 = \frac{\xf (1 - (x_0^{-1} + w_0 + (x_0 - \xf) (1 + \mu - \nu^2)) \, \xf)}{(1 - (1 + \mu) \, \xf)^2}.
\end{equation}
\end{teo}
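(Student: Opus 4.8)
The plan is to reduce the infinite-dimensional chain \eqref{F: Proc} to a two-dimensional density dependent Markov chain to which the theory of \citet{ethier} applies. The key observation is that absorption coincides with the vanishing of the total number of \emph{remaining stifling experiences}
\[ \n{W}{N}(t) = \sum_{i=1}^{\infty} i \, \n{Y_i}{N}(t), \]
since $\n{W}{N}(t)=0$ if and only if $\n{Y}{N}(t)=0$. Reading off the rates in \eqref{F: Proc}, each event either decreases $\n{X}{N}$ by one (a spreader meets an ignorant, total rate $XY$) or decreases $\n{W}{N}$ by one (a stifling experience, total rate $(N-X)Y$); the common factor $Y$ means that in the embedded jump chain an event is of ignorant type with probability $X/N$ and of stifling type with probability $(N-X)/N$, independently of $Y$. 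Hence $(\n{X}{N},\n{W}{N})$, observed at its jumps, is itself Markov, and $\n{X}{N}(\tau^{(N)})$ is left unchanged if we run it as the continuous-time density dependent chain with jump $(-1,R)$ at rate $X$ and jump $(0,-1)$ at rate $N-X$. In normalized variables $(x,w)$ this is the density dependent family with rate functions $\beta_{(-1,R)}(x)=x$ (decomposed as $\beta_{(-1,i)}(x)=r_i x$) and $\beta_{(0,-1)}(x)=1-x$. The moments $\mu$ and $\nu^2$ enter precisely through the first two moments of $R$, which is exactly why the hypothesis $\nu^2<\infty$ is needed here.

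First I would apply the functional law of large numbers and the functional central limit theorem for density dependent chains (\citet{ethier}, Chapter~11). The drift is $F(x,w)=(-x,(1+\mu)x-1)$, so the limit trajectory is $\hat x(s)=x_0 \, e^{-s}$ and $\hat w(s)=w_0+(1+\mu)(x_0-\hat x(s))+\log(\hat x(s)/x_0)$; setting $\hat w=0$ reproduces exactly the equation $f=0$ of Definition~\ref{D: fun}, so the deterministic absorption time $s_\infty$ satisfies $\hat x(s_\infty)=\xf$. The fluctuations $\sqrt{N}\,((\n{X}{N},\n{W}{N})/N-(\hat x,\hat w)) \Rightarrow \mathcal{Z}=(\mathcal{X},\mathcal{W})$, a Gaussian process whose covariance $C(s)=\Cov(\mathcal{Z}(s))$ solves the Lyapunov equation $C'=\partial F(\hat x,\hat w)\,C+C\,\partial F(\hat x,\hat w)^{\!\top}+G(\hat x)$ with $C(0)=0$, where $\partial F=\bigl(\begin{smallmatrix}-1&0\\ 1+\mu&0\end{smallmatrix}\bigr)$ and $G$ is the diffusion matrix; its $(2,2)$ entry $(\mu^2+\nu^2)\,x+(1-x)$ is what makes $\nu^2$ appear in \eqref{F: Variance}.

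The main obstacle is that we need the fluctuation of $\n{X}{N}$ not at the deterministic time $s_\infty$ but at the random time $\tilde\tau^{(N)}$ at which $\n{W}{N}$ first hits $0$. This is where the hypotheses are used: they guarantee the transversal downcrossing $\hat w'(s_\infty)=(1+\mu)\,\xf-1<0$ (equivalently $f'(\xf)>0$), excluding only the degenerate tangential configuration $w_0=0$, $x_0>(1+\mu)^{-1}$ and its boundary. Under transversality a delta-method argument (linearizing $\hat w$ at $s_\infty$ to obtain $\tilde\tau^{(N)}-s_\infty\approx -\mathcal{W}(s_\infty)/(\sqrt{N}\,\hat w'(s_\infty))$ and substituting into the expansion of $\n{X}{N}$) yields
\[ \sqrt{N}\left(\frac{\n{X}{N}(\tau^{(N)})}{N}-\xf\right) \Rightarrow \mathcal{X}(s_\infty)-\frac{\hat x'(s_\infty)}{\hat w'(s_\infty)}\,\mathcal{W}(s_\infty), \]
a centred Gaussian variable. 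I expect this transfer from a fixed to a random time to be the delicate step, requiring tightness and continuity estimates to control $\n{X}{N}$ near $s_\infty$; the Ethier--Kurtz machinery for exit times supplies the right tools.

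Finally, using $\hat x'(s_\infty)=-\xf$ and $\hat w'(s_\infty)=(1+\mu)\xf-1$, the limiting variance is
\[ \sigma^2=\V(\mathcal{X}(s_\infty))-2\,\frac{\hat x'(s_\infty)}{\hat w'(s_\infty)}\,\Cov(\mathcal{X}(s_\infty),\mathcal{W}(s_\infty))+\frac{\hat x'(s_\infty)^2}{\hat w'(s_\infty)^2}\,\V(\mathcal{W}(s_\infty)). \]
Because $\hat x(s)=x_0 e^{-s}$, the Lyapunov equation integrates in closed form along the trajectory; evaluating the entries of $C(s_\infty)$ and substituting into the display yields, after simplification, formula~\eqref{F: Variance}, whose denominator $(1-(1+\mu)\xf)^2$ is precisely $\hat w'(s_\infty)^2$. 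This last computation is lengthy but routine, so I would relegate it to the explicit solution of the covariance ODE.
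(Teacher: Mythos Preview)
Your proposal is correct and follows essentially the same route as the paper: the same time-change/dimension reduction to the two-dimensional density dependent chain $(\tilde X,\tilde W)$, the same drift $F$, the same diffusion matrix $G$, and the same application of the Ethier--Kurtz exit-time CLT (their Theorem~11.4.1), which is exactly what you describe as the delta-method step and which produces the limiting variable $U_x(\tf)+\dfrac{\xf}{(\mu+1)\xf-1}\,U_w(\tf)$; the paper computes the covariance via the integral formula $\int_0^{\tf}\Phi G\Phi^{\top}$ rather than your equivalent Lyapunov ODE. One slip to fix: the configuration the hypotheses \emph{exclude} is $w_0=0$ with $x_0\le(1+\mu)^{-1}$ (the tangential/boundary-start case), not $x_0>(1+\mu)^{-1}$ as you wrote.
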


\begin{obs}
Observe that our results refer to a general initial condition, similar to that considered in the deterministic analysis presented in~\citet{belen04}.
The process starting with one spreader and $N$~ignorants corresponds to $x_0=1$ and $w_0=0$, in which case the limiting fraction of ignorants and the variance of the asymptotic normal distribution in the CLT reduce respectively to
\begin{gather*}
\xf = \xf(\mu, 1, 0) = -(\mu + 1)^{-1} \, W_0(-(\mu + 1) \, e^{-(\mu + 1)}) \quad \text{and} \\[0.1cm]
\sigma^2 = \frac{\xf (1 - \xf) (1 - (1 + \mu - \nu^2) \, \xf)}{(1 - (1 + \mu) \, \xf)^2}.
\end{gather*}
The behaviour of $\xf$ as a function of $\mu$ is shown in Figure~\ref{Fig: Proportion}.

\begin{figure}[ht]
\centering
\includegraphics[scale=0.9]{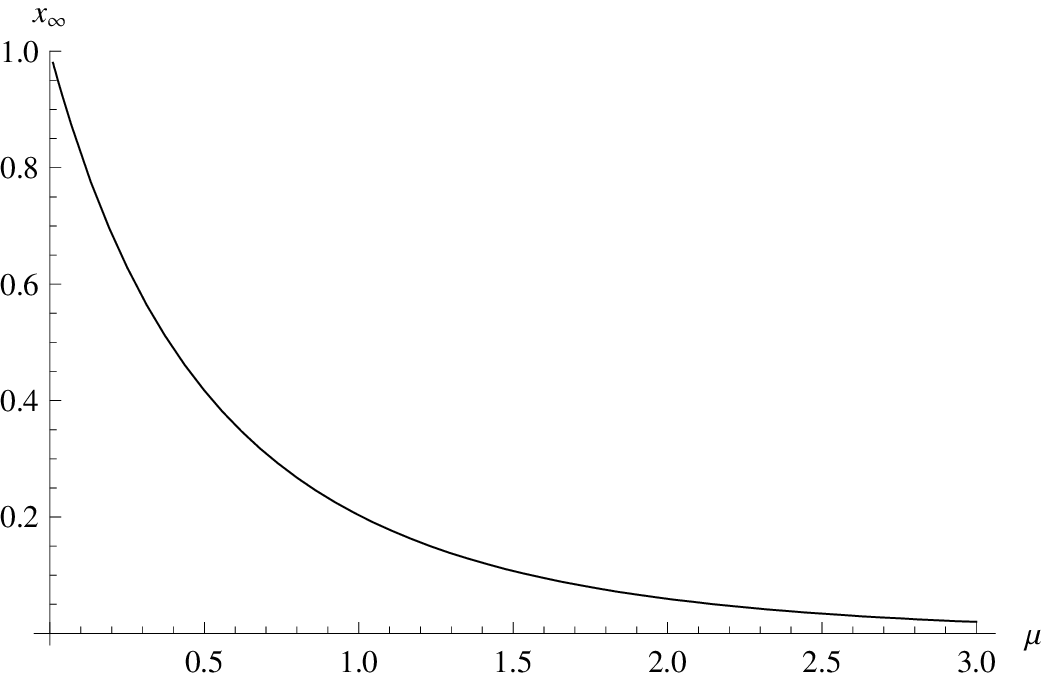}
\caption{Graph of $\xf(\mu, 1, 0)$.}
\label{Fig: Proportion}
\end{figure}

Here are some important cases:

\smallskip
\textbf{(a)} For $R \equiv \kappa$ ($\kappa \geq 1$ an integer), we have the $\kappa$-fold stifling Maki-Thompson model (so called by~\citet{dg}, in the context of the Daley-Kendall model), for which
\[ \sigma^2 = \frac{\xf (1 - \xf)}{1 - (\kappa + 1) \, \xf}, \]
where
\begin{equation}
\label{F: xf MT k}
\xf = \xf(\kappa, 1, 0) = -(\kappa + 1)^{-1} \, W_0(-(\kappa + 1) \, e^{-(\kappa + 1)}).
\end{equation}
Table~\ref{Tab: 1} exhibits the values of $\xf$ and $\sigma^2$ in this case for $\kappa = 1, \dots, 8$.
The original Maki-Thompson model is obtained by considering $R\equiv 1$, $x_0=1$ and $w_0=0$, consequently our theorems generalize classical results proved by \citet{sudbury} and \citet{watson}.
For the $2$-fold stifling Maki-Thompson model, the asymptotic value of $\xf \approx 0.0595$ was originally obtained by~\citet{carnal}.
Formula~\eqref{F: xf MT k} is presented in Appendix~D of Belen's doctorate thesis~\citep{belen08}.

\smallskip
\textbf{(b)} Let $R \sim \text{Geometric}(p)$, that is, $r_0 = 0$ and $r_i = p \, (1 - p)^{i - 1}$, $i = 1, 2, \dots$
In this model, an ignorant always becomes a spreader upon hearing the rumour and each time a spreader meets another spreader or a stifler, he decides with probability~$p$ to become a stifler, independently for each spreader and each meeting.
Thus, given that a spreader has not yet stopped propagating the rumour, the conditional distribution of the additional number of stifling experiences he will have does not depend on how many stifling experiences he already had.
This means that every time a spreader chooses whether or not to become a stifler, he does not have a ``memory'' of how many unsuccessful telling meetings he has been involved in.
Table~\ref{Tab: 2} shows the values of $\xf$ and~$\sigma^2$ for $x_0=1$, $w_0=0$ and some arbitrarily chosen values of $p$.

\smallskip
\textbf{(c)} Consider $R \sim \text{Poisson}(\lambda)$, in which case an ignorant individual has the choice (with a positive probability equal to $e^{-\lambda}$) of becoming a stifler as soon as he learns the rumour.
Moreover, in his successive decisions about stifling, a spreader does have some ``memory'' of the number of his previous stifling experiences.
Table~\ref{Tab: 3} presents the values of $\xf$ and~$\sigma^2$ for $x_0=1$, $w_0=0$ and some values of $\lambda$.
\end{obs}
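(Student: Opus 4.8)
The claims gathered in this remark are purely algebraic: each follows by substituting the indicated parameter values into the expressions \eqref{F: lambert} for $\xf$ and \eqref{F: Variance} for $\sigma^2$, both of which are already established. Hence nothing probabilistic remains to be proved, and the plan is simply to record these substitutions together with the elementary simplifications they entail. I would first note that the standing case $x_0=1$, $w_0=0$ does satisfy the hypothesis of Theorem~\ref{T: CLT}, since $\mu>0$ gives $x_0=1>(1+\mu)^{-1}$, so the specialization is legitimate.

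The reduction to $x_0=1$, $w_0=0$ is carried out in two steps. For $\xf$, setting $x_0=1$ and $w_0=0$ in \eqref{F: lambert} yields at once
$$\xf(\mu,1,0)=-(\mu+1)^{-1}\,\LW{-(\mu+1)\,e^{-(\mu+1)}},$$
the argument of $W_0$ lying in $(-e^{-1},0)$ because $t\mapsto t\,e^{-t}$ is maximized at $t=1$, so that $(1+\mu)e^{-(1+\mu)}<e^{-1}$ for $\mu>0$; thus the principal branch is indeed the correct one. For the variance, substituting $x_0=1$ and $w_0=0$ into \eqref{F: Variance} turns the bracket in the numerator into $1-\bigl(1+(1-\xf)(1+\mu-\nu^2)\bigr)\xf$. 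Writing $a=1+\mu-\nu^2$, one has $1-\bigl(1+(1-\xf)a\bigr)\xf=(1-\xf)-a\xf(1-\xf)=(1-\xf)(1-a\xf)$, so the numerator becomes $\xf(1-\xf)(1-(1+\mu-\nu^2)\xf)$ and the displayed formula for $\sigma^2$ follows. This factorization is the only step that is not a verbatim substitution.

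I would then specialize to the three distributions by inserting the first two moments of $R$. For $R\equiv\kappa$ we have $\mu=\kappa$ and $\nu^2=\V[R]=0$, so $1+\mu-\nu^2=\kappa+1=1+\mu$; one factor $1-(\kappa+1)\xf$ cancels between numerator and denominator, leaving $\sigma^2=\xf(1-\xf)/(1-(\kappa+1)\xf)$, while \eqref{F: xf MT k} is \eqref{F: lambert} read at $\mu=\kappa$. For $R\sim\text{Geometric}(p)$ with $r_0=0$ one uses the standard moments $\mu=1/p$ and $\nu^2=(1-p)/p^2$; for $R\sim\text{Poisson}(\lambda)$ one uses $\mu=\nu^2=\lambda$, whence $1+\mu-\nu^2=1$ and the variance simplifies to $\sigma^2=\xf(1-\xf)^2/(1-(1+\lambda)\xf)^2$. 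The tabulated numbers are then obtained by evaluating the Lambert-$W$ expression for $\xf$ at the listed parameters and substituting into $\sigma^2$.

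The interpretive statements in cases (b) and (c) — the memorylessness of the geometric mechanism and the possibility $r_0=e^{-\lambda}>0$ in the Poisson case — require no proof beyond reading off $r_i=P(R=i)$ for the respective laws. Since every computation is a substitution into an already-proved identity, there is no genuine obstacle here; the only points demanding a moment's care are the numerator factorization $1-\bigl(1+(1-\xf)a\bigr)\xf=(1-\xf)(1-a\xf)$ used in the variance reduction and the routine check that the argument of $W_0$ remains in $(-e^{-1},0)$, so that the principal branch applies.
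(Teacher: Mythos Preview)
Your proposal is correct and matches the paper's treatment: the remark is not given a separate proof in the paper, as its content follows by direct substitution of $x_0=1$, $w_0=0$ (and then the moments of the particular distributions) into the already-established formulas~\eqref{F: lambert} and~\eqref{F: Variance}. Your explicit factorization of the numerator and your check that the CLT hypothesis holds are welcome details that the paper leaves to the reader.
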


\begin{table}[htb]
\begin{center}
\begin{small}
\begin{tabular}{|c|r@{.}lr@{.}lr@{.}lr@{.}lr@{.}lr@{.}lr@{.}lr@{.}l|}
\hline
$ \kappa $ & \multicolumn{2}{c}{1} & \multicolumn{2}{c}{2} & \multicolumn{2}{c}{3} & \multicolumn{2}{c}{4} & \multicolumn{2}{c}{5} & \multicolumn{2}{c}{6} & \multicolumn{2}{c}{7} & \multicolumn{2}{c|}{8} \\\hline
$ \xf $ & 0&203 & 0&0595 & 0&0198 & 0&00698 & 0&00252 & 0&000918 & 0&000336 & 0&000124 \\
$ \sigma^2 $ & 0&273 & 0&0681 & 0&0211 & 0&00718 & 0&00255 & 0&000923 & 0&000337 & 0&000124 \\
\hline
\end{tabular}
\end{small}
\caption{$\kappa$-fold stifling model, $x_0=1$ and $w_0=0$.}
\label{Tab: 1}
\bigskip
\begin{small}
\begin{tabular}{|c|r@{.}lr@{.}lr@{.}lr@{.}lr@{.}lr@{.}lr@{.}lr@{.}lr@{.}l|}
\hline
$ p $ & \multicolumn{2}{c}{0.1} & \multicolumn{2}{c}{0.2} & \multicolumn{2}{c}{0.3} & \multicolumn{2}{c}{0.4} & \multicolumn{2}{c}{0.5} & \multicolumn{2}{c}{0.6} & \multicolumn{2}{c}{0.7} & \multicolumn{2}{c}{0.8} & \multicolumn{2}{c|}{0.9} \\\hline
$ \xf $ & 0&0000167 & 0&00252 & 0&0139 & 0&0340 & 0&0595 & 0&0878 & 0&117 & 0&147 & 0&175 \\
$ \sigma^2 $ & 0&0000167 & 0&00268 & 0&0163 & 0&0427 & 0&0780 & 0&118 & 0&159 & 0&199 & 0&238 \\
\hline
\end{tabular}
\end{small}
\caption{$R \sim \text{Geometric}(p)$, $x_0=1$ and $w_0=0$.}
\label{Tab: 2}
\bigskip
\begin{small}
\begin{tabular}{|c|r@{.}lr@{.}lr@{.}lr@{.}lr@{.}lr@{.}lr@{.}lr@{.}lr@{.}lr@{.}l|}
\hline
$ \lambda $ & \multicolumn{2}{c}{0.1} & \multicolumn{2}{c}{0.3} & \multicolumn{2}{c}{0.5} & \multicolumn{2}{c}{0.7} & \multicolumn{2}{c}{0.9} & \multicolumn{2}{c}{1.1} & \multicolumn{2}{c}{1.3} & \multicolumn{2}{c}{1.5} & \multicolumn{2}{c}{1.7} & \multicolumn{2}{c|}{1.9} \\\hline
$ \xf $ & 0&824 & 0&577 & 0&417 & 0&309 & 0&233 & 0&178 & 0&138 & 0&107 & 0&0844 & 0&0668 \\
$ \sigma^2 $ & 2&908 & 1&654 & 1&012 & 0&654 & 0&440 & 0&307 & 0&219 & 0&160 & 0&119 & 0&0895 \\
\hline
\end{tabular}
\end{small}
\caption{$R \sim \text{Poisson}(\lambda)$, $x_0=1$ and $w_0=0$.}
\label{Tab: 3}
\end{center}
\end{table}

\section{Proofs}
\label{S: Proofs}

Here are the main ideas in the proofs of Theorems~\ref{T: LLN} and~\ref{T: CLT}. 
First, by means of a suitable time change of the process, we define a new process $\{\n{\tilde{V}}{N}(t)\}_{t\geq 0}$ with the same transitions as $\{\n{V}{N}(t)\}_{t\geq 0}$, so that they end at the same point of the state space. 
Next, we work with a reduced Markov chain obtained from $\{\n{\tilde{V}}{N}(t)\}_{t\geq 0}$ in order to apply the theory of density dependent Markov chains presented in \citet{ethier}. 
As the arguments follow a path similar to that presented in~\citet{kurtz}, we present only a brief sketch of the proofs. 

\subsection*{Time-changed process.} 
Since the distribution of $\n{X}{N}(\tau^{(N)})$ depends on the process $\{{V}^{(N)}(t)\}_{t\geq 0}$ only through the embedded Markov chain, we consider a time-changed version of the process. Let $\{\n{\tilde{V}}{N}(t)\}_{t\geq 0}$ be the infinite-dimensional continuous time Markov chain
$$\{(\n{\tilde{X}}{N}(t),\n{\tilde{Y}}{N}_1(t),\n{\tilde{Y}}{N}_2(t),\dots)\}_{t \geq 0}$$
with increments and corresponding rates given by
{\allowdisplaybreaks
\begin{align*}
&\text{increment} & &\text{rate} & &  \\
& (-1,0,0,\dots) & &r_0 \tilde X\\
& (-1, 0, \dots, \overset{i-1}{0},\overset{i}{1},\overset{i+1}{0},\dots) & & r_i \tilde X  & & i=1,2,\dots\\
& (0,0,\dots, \overset{i-1}{1},\overset{i}{-1}, \overset{i+1}{0}, \dots) & &(N-\tilde X) \, \tilde Y_i \, (\tilde Y)^{-1} & & i=2,3,\dots \\[0.1cm]
& (0,-1,0,\dots) & &(N-\tilde X) \, \tilde Y_1 \, (\tilde Y)^{-1}.& & 
\end{align*}}%
Furthermore, $\{\n{\tilde{V}}{N}(t)\}_{t\geq 0}$ can be defined in such a way that it has the same initial state and the same transitions as~$\{\n{V}{N}(t)\}_{t\geq 0}$, so both have the same embedded Markov chain. Thus, by defining
$$ \tilde{\tau}^{(N)}= \inf \{t: \n{\tilde{Y}}{N}(t) = 0 \},$$
we have that $\n{X}{N}(\tau^{(N)})=\n{\tilde{X}}{N}(\tilde{\tau}^{(N)}).$

\subsection*{Dimension reduction and deterministic limit.} 
In order to prove the desired limit theorems using Theorem 11.2.1 of \citet{ethier}, we work with a reduced Markov chain. 
We define 
$$\n{\tilde{W}}{N}(t)=\sum_{i=1}^{\infty} i \, \n{\tilde{Y}_i}{N}(t),$$
and note that the process $\{(\n{\tilde{X}}{N}(t),\n{\tilde{W}}{N}(t))\}_{t\geq 0}$ is a continuous time Markov chain with increments and rates given by
\begin{equation}
\label{F: RRP}
\begin{aligned}
&\text{increment}  & \quad &\text{rate}  	& \, & \\
&\ell_i=(-1,i) 	   & 	   &r_i \tilde X    & 	 &i = 0, 1, \dots \\
&\ell_{-1}=(0,-1)  & 	   &N - \tilde X.   & 	 &
\end{aligned}
\end{equation}

Now we define, for $t\geq 0$,
$$\n{\tilde{v}}{N}(t) = (\n{\tilde{x}}{N}(t),\n{\tilde{w}}{N}(t)) = N^{-1} (\n{\tilde{X}}{N}(t),\n{\tilde{W}}{N}(t)),$$
and consider 
$$\beta_{\ell_{-1}}({x},{w})=1- {x} \quad \text{and} \quad \beta_{\ell_i}({x},{w})=r_i {x}, \quad i=0,1,\dots$$
Notice that the rates in \eqref{F: RRP} can be written as
$$N \, \beta_{\ell_i} \left( \dfrac{\tilde X}{N}, \dfrac{\tilde W}{N} \right),$$
so $ \{ \tilde v^{(N)}(t) \}_{t \geq 0} $ is a density dependent Markov chain with possible transitions in the set $\{ \ell_{-1}, \ell_0, \ell_1, \dots \}$.

Now we use Theorem~11.2.1 of~\citet{ethier} to conclude that the process $\{\n{\tilde{v}}{N}(t)\}_{t\geq 0}$ converges almost surely as $ N \to \infty $ to a deterministic limit.
The drift function defined in~\citet{ethier} by $F(x,w)=\sum_{i=-1}^{\infty} \ell_i \, \beta_{\ell_i}(x,w)$ is in this case given by
$$F(x,w)=(-x,(\mu +1)x-1).$$
Hence the limiting deterministic system is governed by the following system of ordinary differential equations
\begin{equation*}
\begin{cases}
x^{\prime}(t) = -x(t), \\
w^{\prime}(t) = (\mu +1)x-1
\end{cases}
\end{equation*}
with initial conditions $x(0)=x_0$ and $w(0)=w_0$. The solution of this system is given by $v(t)=(x(t),w(t))$, where
$$x(t)=x_0 \, e^{-t} \, \text{ and } \, w(t)=f(x(t))=w_0+(1+\mu)\left(x_0 - x(t)\right)-t.$$

According to Theorem $11.2.1$ of \citet{ethier}, we have that on a suitable probability space,
$$\lim_{N \to \infty} \tilde v^{(N)}(t) = v(t) \quad \text{a.s.}$$ 
uniformly on bounded time intervals. In particular, it can be proved that
\begin{equation}
\label{F: Conv x}
\lim_{N \to \infty}\tilde x^{(N)}(t)= x(t) \quad \text{a.s.} 
\end{equation}
uniformly on~$ \bbR $.
See Lemma~3.6 in~\citet{kurtz} for an analogous detailed proof.

\subsection*{Proofs of Theorems~\ref{T: LLN} and~\ref{T: CLT}.} 
To prove both theorems, we use Theorem 11.4.1 of~\citet{ethier}. 
We adopt their notations, except for the Gaussian process $V$ defined on p.~458, that we would rather denote by $U=(U_x,U_w)$. 
Here, $\varphi(x, w) = w$, and
$$\tf = \inf \{t: w(t) \leq 0 \} = w_0 + (1 + \mu)(x_0 - \xf). $$
Moreover, 
\begin{equation}
\label{F: Der Neg}
\nabla \varphi(v(\tf)) \cdot F(v(\tf)) = w^{\prime} (\tf) = (\mu +1) \xf - 1 < 0.
\end{equation}

\noindent
\textit{Proof of Theorem \ref{T: LLN}.}
We note that $w_0 > 0$ and \eqref{F: Der Neg} imply that $w(\tf - \eps) > 0$ and $w(\tf + \eps) < 0$ for $0 < \eps < \tf$. Then, the almost sure convergence of~$\tilde{w}^{(N)}$ to~$w$ uniformly on bounded intervals yields that
\begin{equation}
\label{F: Conv tf}
\lim_{N \to \infty} \, \tilde \tau^{(N)} = \tf \quad \text{a.s.}
\end{equation}
In the case where $ w_0 = 0 $ and $ x_0 > (1+\mu)^{-1} $, this result is also valid because $ w^{\prime}(0) > 0$ and \eqref{F: Der Neg} still holds.
On the other hand, if $ w_0 = 0 $ and $ x_0 \leq (1+\mu)^{-1}$, then $w(t) < 0$ for all $t > 0$, and again the almost sure convergence of~$\tilde{w}^{(N)}$ to~$w$ uniformly on bounded intervals yields that $ \lim_{N \to \infty} \tilde \tau^{(N)} = 0 = \tf $ almost surely.
Therefore, as $X^{(N)}(\tau^{(N)}) = \tilde{X}^{(N)}(\tilde{\tau}^{(N)})$, we obtain Theorem~\ref{T: LLN} from~\eqref{F: Conv x} and~\eqref{F: Conv tf}.

\medskip
\noindent
\textit{Proof of Theorem \ref{T: CLT}.}
From Theorem~11.4.1 of~\citet{ethier}, we have that if $w_0>0$ or $w_0=0$ and $x_0>(1+\mu)^{-1}$, then
$\sqrt{N} \, (\tilde x^{(N)}(\tilde \tau^{(N)}) - \xf)$
converges in distribution as $N \to \infty$ to
\begin{equation}
\label{F: LD}
U_x(\tf) + \frac{\xf}{(\mu+1)\xf-1} \, U_w(\tf).
\end{equation}
The resulting normal distribution has mean zero, so, to complete the proof of Theorem \ref{T: CLT}, we need to calculate the corresponding variance.

To this end, we have to compute the covariance matrix $\Cov(U(\tf), U(\tf))$, a task that can be accomplished using a mathematical software.
The first step is to calculate the matrix of partial derivatives of the drift function $F$ and the matrix~$G$. 
We obtain
$$
\partial F(x, w) = 
\begin{pmatrix} 
-1 & 0 \\ 
(\mu + 1) & 0 
\end{pmatrix}$$
and 
$$G(x, w) = 
\begin{pmatrix}
x & -\mu x \\
-\mu x & (\nu^2 + \mu^2 -1)x +1
\end{pmatrix}.
$$
Next, we compute the solution $\Phi$ of the matrix equation
$$ \frac{\partial}{\partial t} \, \Phi(t, s) = \partial F(x(t), w(t)) \, \Phi(t, s),
\quad \Phi(s, s) = I_2, $$
which is given by
$$ 
\Phi(t, s) = 
\begin{pmatrix} 
e^{-(t-s)} & 0 \\ 
(\mu +1)(1- e^{-(t-s)}) & 1
\end{pmatrix}. 
$$
Hence, the covariance matrix of the Gaussian process $U$ at time~$t$ is obtained by the formula
\begin{equation}
\label{F: Covariance}
\Cov(U(t), U(t)) = \int_0^{t} \Phi(t, s) \, G(x(s), w(s)) \, {[\Phi(t, s)]}^T \, ds.
\end{equation}
As the final step to compute $\Cov(U(\tf), U(\tf))$, we have to replace $e^{-t}$ and $t$ in the formula obtained from~\eqref{F: Covariance} by $\xf / x_0$ and $\tf$, respectively.
The resulting formulas are 
{\allowdisplaybreaks
\begin{align*}
\V(U_x(\tf)) &= ((x_0 - \xf) \xf) / x_0, \\
\V(U_w(\tf)) &= {(\mu + 1)^2 (x_0 - \xf) \xf}/{x_0} + \nu^2 (x_0 - \xf) \\
& \quad + (1 - 2 (\mu + 1) \xf) \tf, \\
\Cov(U_x(\tf), U_w(\tf)) &= \tf \xf - {(\mu + 1) (x_0 - \xf) \xf}/{x_0}.
\end{align*}}%
Using that $\tf = w_0 + (1+\mu)(x_0-\xf)$, \eqref{F: LD} and well-known properties of the variance, we get formula~\eqref{F: Variance}.

\section{Concluding remarks}
\label{S: Concluding remarks}

We have proposed a general Maki-Thompson model in which an ignorant individual is allowed to have 
a random number of stifling experiences once he is told the rumour. 
The assigned numbers of stifling experiences are independent and identically distributed random variables with mean~$\mu$ and variance~$\nu^2$.
We prove that the ultimate proportion of ignorants converges in probability to an asymptotic value as the population size tends to~$\infty$.
A Central Limit Theorem describing the magnitude of the random fluctuations around this limiting value is also derived.
The asymptotic value and the variance of the Gaussian distribution in the CLT are functions of $\mu$, $\nu^2$ and some constants related to the initial state of the process.

We observe that in fact it is possible to obtain another result, concerning the mean number $\n{m}{N}$ of transitions that the process makes until absorption. 
Using an argument analogous to that presented in Theorem~2.5 of~\citet{kurtz}, it can be proved that, if $\nu^2<\infty$, then 
\[ \lim_{N\to \infty} {N}^{-1} \, {\n{m}{N}} = \tf = w_0 + (1+\mu)(x_0-\xf). \]

As a final remark, we would like to point out the usefulness of the theory of density dependent Markov chains as a tool for studying the limiting behaviour of stochastic rumour processes.
This approach constitutes an alternative to the pgf method and the Laplace transform presented in~\citet{dg}, \citet{gani} and \citet{pearce}.

\section*{Acknowledgments}

The authors are grateful to Tom Kurtz, Alexandre Leichsenring, Nancy Lopes Garcia, Pablo Groisman and Sebastian Grynberg for fruitful discussions.
Thanks are also due to three reviewers for their helpful comments.


\begin{thebibliography}{99}

\bibitem[Belen(2008)]{belen08}
Belen, S., 2008. 
The behaviour of stochastic rumours. 
Ph.D. thesis, School of Mathematical Sciences, University of Adelaide, Australia.
Available at \urlstyle{rm}\url{http://hdl.handle.net/2440/49472}.

\bibitem[Belen and Pearce(2004)]{belen04}
Belen, S., Pearce, C.E.M., 2004. 
Rumours with random initial conditions. 
\textit{The Australian \& New Zealand Industrial and Applied Mathematics Journal} \textbf{45}, 393--400.

\bibitem[Carnal(1994)]{carnal}
Carnal, H., 1994.
Calcul des probabilit\'es et mod\'elisation. 
\textit{Elemente der Mathematik} \textbf{49} (4), 166--173.

\bibitem[Corless et al.(1996)]{lambert}
Corless, R.M., Gonnet, G.H., Hare, D.E.G., Jeffrey, D.J., Knuth, D.E., 1996.
On the Lambert $W$ function. 
\textit{Advances in Computational Mathematics} \textbf{5} (4), 329--359.

\bibitem[Daley and Gani(1999)]{dg}
Daley, D.J., Gani, J., 1999. 
Epidemic Modelling: an Introduction.
Cambridge University Press, Cambridge. 

\bibitem[Daley and Kendall(1965)]{kendall}
Daley, D.J., Kendall, D.G., 1965.
Stochastic rumours. 
\textit{Journal of the Institute of Mathematics and its Applications} \textbf{1}, 42--55.

\bibitem[Dunstan(1982)]{dunstan}
Dunstan, R., 1982.
The rumour process. 
\textit{Journal of Applied Probability} \textbf{19} (4), 759--766.

\bibitem[Ethier and Kurtz(1986)]{ethier}
Ethier, S.N., Kurtz, T.G., 1986.
Markov Process: Characterization and Convergence. Wiley Series in Probability and 
Mathematical Statistics. John Wiley \& Sons Inc., New York.

\bibitem[Gani(2000)]{gani}
Gani, J., 2000.
The Maki-Thompson rumour model: a detailed analysis. 
\textit{Environmental Modelling \& Software} \textbf{15}, 721--725.

\bibitem[Kurtz et al.(2008)]{kurtz}
Kurtz, T.G., Lebensztayn, E., Leichsenring, A.R., Machado, F.P., 2008.
Limit theorems for an epidemic model on the complete graph. 
\textit{ALEA. Latin American Journal of Probability and Mathematical Statistics} \textbf{4}, 45--55.

\bibitem[Lebensztayn et al.(2010)]{EFP}
Lebensztayn, E., Machado, F.P., Rodr\'iguez. P.M., 2010.
Limit theorems for a general stochastic rumour model. 
Available at \urlstyle{rm}\url{http://arxiv.org/abs/1003.4995}.

\bibitem[Lefevre and Picard(1994)]{picard}
Lefevre, C., Picard, P., 1994.
Distribution of the final extent of a rumour process. 
\textit{Journal of Applied Probability} \textbf{31} (1), 244--249.

\bibitem[Maki and Thompson(1973)]{maki}
Maki, D.P., Thompson, M., 1973.
Mathematical Models and Applications. Prentice-Hall, Englewood Cliffs.

\bibitem[Pearce(2000)]{pearce}
Pearce, C.E.M., 2000.
The exact solution of the general stochastic rumour. 
\textit{Mathematical and Computer Modelling} \textbf{31}, 289--298.

\bibitem[Sudbury(1985)]{sudbury}
Sudbury, A., 1985.
The proportion of the population never hearing a rumour. 
\textit{Journal of Applied Probability} \textbf{22} (2), 443--446.

\bibitem[Watson(1988)]{watson}
Watson, R., 1988.
On the size of a rumour. 
\textit{Stochastic Processes and their Applications} \textbf{27}, 141--149.

\end{thebibliography}
\end{document}